\documentclass{amsart}
\usepackage{amssymb,enumerate}
\usepackage[chatter]{rotating}
\usepackage{epsfig}
\usepackage{fancyvrb}
\newcommand{\RomanNumeralCaps}[1]
    {\MakeUppercase{\romannumeral #1}}
\newtheorem{lemma}{Lemma}

\newtheorem{theorem}[lemma]{Theorem}
\newtheorem{cor}[lemma]{Corollary}
\newcommand{\ad}{{\rm ad}}
\newcommand{\F}{{\mathbb F}}
\newcommand{\Z}{{\mathbb Z}}

\newcommand{\lm}{\lambda}

\newcommand{\al}{\alpha}
\newcommand{\bt}{\beta}
\newcommand{\sg}{\sigma}

\newcommand{\h}{\hat{\mathcal H}}
\newcommand{\ii}{\bar{\imath}}

\title[$2$-generated axial algebras]{A note on $2$-generated symmetric axial algebras of Monster type}
\author{Clara Franchi and Mario Mainardis}

\address{Dipartimento di Matematica e Fisica,
Universit\`a Cattolica del Sacro Cuore,
Via Musei 41,
I-25121 Brescia, Italy}
\email{clara.franchi@unicatt.it}

\address{Dipartimento di Scienze Matematiche, Informatiche e Fisiche, 
Universit\`a degli Studi di Udine, via delle Scienze 206,
I-33100 Udine, Italy}
\email{mario.mainardis@uniud.it}

\begin{document}
\begin{abstract}
In~\cite{Yabe}, Yabe gives an almost complete classification of primitive symmetric $2$-generated axial algebras of Monster type. In this note, we construct a new infinite-dimensional primitive $2$-generated symmetric axial algebra of Monster type $(2, \frac{1}{2})$ over a field of characteristic $5$, and use this algebra to complete the last case left open in Yabe's classification. 
\end{abstract}
\maketitle


\section{Introduction}
Axial algebras of Monster type have been introduced in~\cite{HRS} by Hall, Rehren and Shpectorov, in order to generalise some subalgebras of the Griess algebra (defined Majorana algebras by Ivanov~\cite{Ivanov}) and create a new tool for better understanding, and possibly unifying, the classification of finite simple groups. They have recently appeared also in other branches of mathematics (see~\cite{tkb, tk1}). In~\cite{RT,R} Rehren started a systematic study of primitive $2$-generated axial algebras of Monster type, constructing several classes of new algebras. More examples have been found by Galt et al.~\cite{DM},  and, independently, by Yabe~\cite{Yabe}.  
In~\cite{Yabe}, Takahiro Yabe obtained an almost complete classification of the primitive $2$-generated symmetric axial algebras of Monster type. Yabe left open only the case of algebras over a field of characteristic $5$ and axial dimension greater than $5$. Indeed something surprising happens in the latter case, namely we show that, over a field $\F$ of  characteristic $5$, there exists a new infinite-dimensional primitive $2$-generated symmetric axial algebra $\h$ of Monster type $(2, \frac{1}{2})$ such that any primitive $2$-generated symmetric axial algebras of Monster type $(2, \frac{1}{2})$  is isomorphic to a quotient of $\h$ (in particular, the Highwater algebra $\mathcal H$~\cite{FMS1} is a proper factor of $\h$ over an infinite-dimensional ideal). As a corollary we complete Yabe's classification. 
 
 For the definitions and further motivation refer to~\cite{HRS, RT, R}, for the notation and the basic properties of axial algebras refer to~\cite{FMS1}. In particular, throughout this paper, $\F$ is a field of characteristic $5$. For a $2$-generated symmetric axial algebra $V$ of Monster type $(\al, \bt)$ over  $\F$, let $a_0, a_1$ be the generating axes of $V$.  For $i\in \{0,1\}$,  let $\tau_i$ be the Miyamoto involution associated to $a_i$. Set $\rho:=\tau_0\tau_1$, and, for $i\in \Z$, $a_{2i}:=a_0^{\rho^i}$ and $a_{2i+1}:=a_1^{\rho^i}$. Note that, since $\rho$ is an automorphism of $V$, for every $j\in \Z$,  $a_j$ is an axis. Denote by $\tau_j:=\tau_{a_j}$ the corresponding Miyamoto involution. The algebra $V$ is symmetric if it admits an algebra automorphism $f$ that swaps $a_0$ and $a_1$, whence, for every $i\in \Z$, $a_i^f=a_{-i+1}$. Let $\sigma_{i}$ be the element of $\langle \tau_0, f\rangle $ that swaps $a_0$ with $a_{i}$.
Since, by~\cite[Lemma~4.2]{FMS1}, for $ n\in \Z_+ $ and $i,j\in \Z$ such that $i\equiv_n j$, 
$$
a_ia_{i+n}-\bt(a_i+a_{i+n})=a_ja_{j+n}-\bt(a_j+a_{j+n}),
$$
we can define 
\begin{equation}\label{sn}
s_{\bar  \imath,n}:=a_ia_{i+n}-\beta (a_i+a_{i+n}). 
\end{equation}
where $\bar \imath$ denotes the congruence class $i+n\Z$.
Since $V$ is primitive, there is a linear function $\lambda_{a_0}:V\to \F$  such that every $v$ can be written in a unique way as $v=\lambda_{a_0}(v)a_0+v_0+v_\al+v_\bt$, where $v_0$, $v_\al$, $w_\bt$ are $0$-, $\al$-, $\bt$-eigenvectors for $\ad_{a_0}$, respectively. For $i\in \Z$, set $\lm_i:=\lambda_{a_0}(a_i)$ and let
 \begin{equation}\label{ai}
 a_i=\lambda_{a_0}(a_i)  a_0+u_i+v_i +w_i
 \end{equation}
 be the decomposition of $a_i$ into $ad_{a_0}$-eigenvectors, where $u_i$ is a $0$-eigenvector, $v_i$ is an $\al$-eigenvector and $w_i$ is a $\bt$-eigenvector.
 
 Following Yabe~\cite{Yabe}, we denote by $D$ the positive integer such that $\{a_0, \ldots , a_{D-1}\}$ is a basis for the linear span $\langle a_i \mid i\in \Z\rangle$ of the set of the axes $a_i$'s.  $D$ is called the {\it axial dimension} of $V$. Our results are the following
 
 \begin{theorem}\label{thm1}
Let $V$ be a primitive $2$-generated symmetric axial algebra of Monster type $(2, \frac{1}{2})$ over a field of characteristic $5$. If $\lm_1=\lm_2=1$, then $V$ is isomorphic to a quotient of the algebra $\h$.
\end{theorem}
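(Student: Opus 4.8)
The plan is to construct a surjective algebra homomorphism $\h\to V$ sending the two generating axes of $\h$ to $a_0$ and $a_1$. Since $\h$ is generated as an algebra by its pair of axes, any such map is automatically onto, so the whole content is to verify that the structural identities holding in $\h$ already hold in $V$, using only the axioms of a primitive symmetric axial algebra of Monster type $(2,\frac12)$ together with the hypothesis $\lm_1=\lm_2=1$. Concretely, once $V$ is shown to carry a spanning set and a multiplication matching those of $\h$, the assignment of axes extends to the desired epimorphism and $V$ appears as $\h$ modulo the kernel.

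First I would determine the scalars $\lm_i=\lambda_{a_0}(a_i)$. Writing $a_i$ in its $\ad_{a_0}$-eigenvector form~\eqref{ai} and using the $(2,\frac12)$-fusion law to evaluate $a_0a_i$ and $a_0(a_0a_i)$ gives a linear recurrence for the sequence $(\lm_i)$. The automorphisms $\sg_i$ and the reflection $a_i^f=a_{-i+1}$ transport the eigenspace decomposition between different base axes; combined with the action of $\rho$ this yields the symmetry $\lm_i=\lm_{-i}$. I would then show that $\lm_0=\lm_1=\lm_2=1$, together with this evenness, pins the recurrence down to the constant solution, so that $\lm_i=1$ for every $i\in\Z$.

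Next I would exhibit a spanning set of $V$ mirroring the basis of $\h$, namely the axes $a_i$ ($i\in\Z$) together with the elements $s_{\ii,n}$ of~\eqref{sn}. By definition of $s_{\ii,n}$ every product $a_ia_j$ already lies in this set, so it remains to reduce the mixed products $a_i\,s_{\ii,n}$ and the products $s_{\ii,m}\,s_{\bar\jmath,n}$. For these one decomposes the second factor into $\ad_{a_i}$-eigenvectors, multiplies, and rewrites the outcome through the fusion rule; with $\lm_i\equiv1$ in hand the products close on the spanning set, and I would check that the structure constants so obtained coincide with those defining $\h$.

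The main obstacle is the family of products $s_{\ii,m}\,s_{\bar\jmath,n}$. Over a generic field the analogous computation forces $s_{\ii,n}$ to be independent of the congruence class $\ii$, collapsing everything onto the Highwater algebra $\mathcal H$; in characteristic $5$, however, $\al=2$ and $\bt=\frac12$ satisfy $\al+\bt=\frac52=0$ in $\F$, and it is precisely the relation responsible for that collapse that now degenerates, so the class-dependence survives and $\h$ is strictly larger than $\mathcal H$. The delicate point is therefore to confirm that the fusion law imposes no identity among the $s_{\ii,n}$ beyond those already built into $\h$, so that $V$ is a quotient of $\h$ rather than of some smaller algebra. Once the multiplication of $V$ is seen to satisfy the defining relations of $\h$, the map of axes extends to an epimorphism $\h\twoheadrightarrow V$, which is exactly the assertion of Theorem~\ref{thm1}.
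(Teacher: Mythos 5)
Your overall plan coincides with the paper's: exhibit in $V$ a spanning set $\{a_i,\, s_{\bar r,n}\}$ mirroring the basis $\mathcal B$ of $\h$, verify that the multiplication of $V$ obeys the same structure constants, and extend $\hat a_i\mapsto a_i$, $\hat s_{\bar r,j}\mapsto s_{\bar r,j}$ to an epimorphism. But two of your intermediate steps have genuine gaps. The first is your route to $\lm_i=1$. Evaluating $a_0a_i$ and $a_0(a_0a_i)$ through the eigendecomposition (\ref{ai}) relates these products only to $a_0$, $a_i$, $a_{-i}$ and $s_{\bar 0,i}$ for the \emph{same} index $i$; it produces no recurrence linking $\lm_{i+1}$ to earlier $\lm_j$'s, so "pinning down the constant solution" has nothing to act on. What actually forces $\lm_{i+1}=1$ in the paper is a different mechanism: one expresses the product $s_{\bar 0,1}s_{\bar 0,i}$ (equivalently, products of the eigenvectors $u_1,v_1,u_i,v_i$) in the spanning set via the fusion-law identities, and then compares this expression with its image under the symmetry $\sigma_{i+1}$, obtaining $0=s_{\bar 0,1}s_{\bar 0,i}-(s_{\bar 0,1}s_{\bar 0,i})^{\sigma_{i+1}}=2(\lm_{i+1}-1)(a_0-a_{i+1})$. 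That comparison is legitimate only because, by induction, the lower-order products $a_l s_{\bar m,j}$ with $j\le i$ are already known to lie in the spanning set and to satisfy the explicit formula (\ref{product}), and because the relevant combinations of $s$'s are already known to be $\sigma_j$-invariant. Hence the scalars $\lm_i$, the mod-$3$ identifications among the $s_{\bar r,n}$, and the product formulas cannot be settled sequentially, as in your plan ("with $\lm_i\equiv1$ in hand the products close"); they must be carried together in a single induction with the case split $i+1\equiv_3 0,1,2$. The same $\sigma_j$-comparison trick, absent from your proposal, is precisely what resolves your declared "main obstacle": it yields $s_{\bar 1,i+1}=s_{\bar\imath-\bar 1,i+1}$ when $3\mid(i+1)$ and $s_{\bar 1,i+1}=s_{\bar 0,i+1}$ otherwise, which is exactly the class structure built into $\h$.

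The second gap is logical rather than computational: your "delicate point" --- confirming that the fusion law imposes no identities among the $s_{\ii,n}$ beyond those of $\h$ --- is not needed for Theorem~\ref{thm1}, and trying to prove it inside $V$ would be wasted effort. To realize $V$ as a quotient of $\h$ one only needs the $\h$-formulas to \emph{hold} in $V$; any further identities valid in $V$ simply enlarge the kernel of the epimorphism (the Highwater algebra $\mathcal H$, in which all $s_{\bar r,n}$ with fixed $n$ collapse, is still a quotient of $\h$). Freedom from extra relations is an existence statement about $\h$ itself --- it is the content of Theorem~\ref{HW5}, established separately by the explicit construction of Section~\ref{H} --- and plays no role in the quotient argument you are asked to give here.
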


\begin{cor}\label{thms}
Let $V$ be a primitive $2$-generated symmetric axial algebra of Monster type $(\al, \bt)$ over a field of characteristic $5$. If $D\geq 6$, then $V$ is isomorphic to a quotient of one of the following:
\begin{enumerate}
\item the algebra $6A_{\alpha}$, as defined in~\cite{R};
\item the algebra $V_8(\al)$, as defined in~\cite{DM};
\item the algebra $\h$, as defined in Section~\ref{H}.
\end{enumerate}
\end{cor}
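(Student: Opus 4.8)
The plan is to derive Corollary~\ref{thms} by combining Yabe's classification~\cite{Yabe} with Theorem~\ref{thm1}. Yabe classifies all primitive $2$-generated symmetric axial algebras of Monster type $(\al,\bt)$ over an arbitrary field, leaving open only the algebras over a field of characteristic $5$ with $D\geq 6$. The first step is therefore to read off, from Yabe's list, which algebras can have axial dimension $D\geq 6$: up to quotients these are precisely $6A_\al$, $V_8(\al)$, and the algebras of Monster type $(2,\frac{1}{2})$ associated with the Highwater algebra $\mathcal H$. Since in our situation $\F$ has characteristic $5$, the first two families already account for alternatives (1) and (2), so the whole content of the corollary reduces to the type $(2,\frac{1}{2})$ case --- exactly the case Yabe left open.

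For an algebra $V$ of type $(2,\frac{1}{2})$ in characteristic $5$ with $D\geq 6$, Theorem~\ref{thm1} yields alternative (3) as soon as one knows that $\lm_1=\lm_2=1$. The core of the argument is thus to establish these two equalities. I would extract them from the eigenvector decomposition~\eqref{ai} of the axes together with the elements $s_{\ii,n}$ of~\eqref{sn}: projecting the products $a_0a_i$ onto the $\ad_{a_0}$-eigenspaces and imposing the fusion law of type $(2,\frac{1}{2})$ produces a system of polynomial relations in the $\lm_i$. The claim to prove is that, over a field of characteristic $5$, every solution with $\lm_1\neq 1$ or $\lm_2\neq 1$ forces a linear dependence among $a_0,\dots,a_5$, hence $D\leq 5$, contradicting the hypothesis. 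Once $\lm_1=\lm_2=1$ is secured, Theorem~\ref{thm1} applies verbatim and delivers the conclusion.

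The main obstacle is this last reduction. Over a general field the analogous relations separate the Highwater family from the small-dimensional sporadic algebras through certain discriminants and denominators; the delicate point is to verify that, modulo $5$, these quantities degenerate in exactly one way, pinning the large-dimensional type $(2,\frac{1}{2})$ algebras down to the normalisation $\lm_1=\lm_2=1$ rather than opening further possibilities. A secondary verification, to be carried out alongside, is that Yabe's reductions for the Monster types other than $(2,\frac{1}{2})$ remain valid in characteristic $5$ --- in particular that no denominator vanishing modulo $5$ produces a large-dimensional algebra outside the list (1)--(3).
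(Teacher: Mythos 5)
Your strategy coincides with the paper's: split the Monster types into the cases Yabe's arguments handle and reduce the remaining case to Theorem~\ref{thm1}. However, as written the proposal has two genuine gaps, and they are exactly the two points you flag but do not prove. First, the reduction of the type-$(2,\frac{1}{2})$ case to the hypothesis $\lm_1=\lm_2=1$ of Theorem~\ref{thm1} is stated as ``the claim to prove'' and left as an obstacle; the paper discharges it by invoking the proof of Proposition~4.12 in~\cite{FMS1}, which shows that in the case $\al=4\bt$ (the family containing $(2,\frac{1}{2})$ in characteristic $5$) the condition $D\geq 6$ forces both $(\al,\bt)=(2,\frac{1}{2})$ and $\lm_1=\lm_2=1$. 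Without this step, or a worked-out replacement for it, nothing connects the hypothesis $D\geq 6$ to the hypothesis of Theorem~\ref{thm1}, and the corollary does not follow. Second, the ``secondary verification'' that Yabe's reductions survive in characteristic $5$ is essential, not secondary: since Yabe's classification is precisely what is incomplete in characteristic $5$, you cannot ``read off from Yabe's list'' which characteristic-$5$ algebras have $D\geq 6$ --- that reasoning is circular. The paper instead organizes the proof by the relation between $\al$ and $\bt$ (namely $\al=4\bt$, $\al=2\bt$, or neither) and checks that the proofs of Claims~5.18 and~5.19 in~\cite{Yabe} remain valid in characteristic $5$, yielding quotients of $V_8(\al)$ and of $6A_\al$ respectively in the last two cases.

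In short: your plan is the right plan, and each of its missing pieces corresponds to a specific citation in the paper's one-paragraph proof, but the two verifications you defer constitute the entire mathematical content of the corollary beyond Theorem~\ref{thm1}, so the proposal as it stands is an outline rather than a proof.
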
 
Note that, for every $\al$, Rehren's algebra $6A_\al$ and Yabe's algebra $\mbox{\RomanNumeralCaps{6}}_2(\al, \frac{-\al^2}{4(2\al-1)})$ coincide and the $8$-dimensional algebra $V_8(\al)$ of type $(\al, \frac{\al}{2})$ constructed in~\cite{DM} coincides with Yabe's algebra $\mbox{\RomanNumeralCaps{6}}_1(\al, \frac{\al}{2})$.
Furthermore, remarkably, over a field of characteristic $5$, the Highwater algebra $\mathcal H$ (see~\cite{HW} and~\cite{Yabe}) is isomorphic to a quotient of $\h$, Yabe's algebras $\mbox{\RomanNumeralCaps{5}}_1(2,\frac{1}{2})$ and $\mbox{\RomanNumeralCaps{5}}_2(2,\frac{1}{2})$, and Rehren's algebra $5A_2$ are all isomorphic, and are in turn a quotient of $\mathcal H$. Finally, also the algebra $6A_{2}$ is a quotient algebra of $\h$.

\section{The algebra $\h$}\label{H}
In this section, for every $i\in \Z$, denote by $\ii$ the congruence class $i+3\Z$. Let ${\h}$ be an infinite-dimensional 
$\F$-vector space with basis $\mathcal B:=\{\hat a_i,\hat s_{\bar  0,j}, \hat s_{\bar  1, 3k}, \hat s_{\bar  2,3k} \mid i\in \Z, \: j, k \in \Z_+ \}$,
$$
{\h}:=\bigoplus_{ i\in \Z} \F \hat a_i\oplus\bigoplus_{ j\in \Z_+} \F \hat s_{\bar  0,j}\oplus\bigoplus_{ k\in \Z_+}(\F \hat s_{\bar  1,3k}\oplus\F \hat s_{\bar  2,3k}).
$$
Set $\hat s_{\bar  0,0}:=0$ and, if $j \not \equiv_3 0$, $\hat s_{\bar  1,j}:=\hat s_{\bar  0,j}=:\hat s_{\bar  2,j}$. 
Let $\hat \tau_0$ and $\hat f$ be the linear maps of $\h$ defined on the basis elements by
$$
\hat a_i^{\hat \tau_0}=\hat a_{-i}, \:(\hat s_{\bar  0,j})^{\hat \tau_0}=\hat s_{\bar  0,j}, \:(\hat s_{\bar  1,3k})^{\hat \tau_0}=\hat s_{\bar  2,3k},\: \mbox{ and }\: (\hat s_{\bar  2,3k})^{\hat \tau_0}=\hat s_{\bar  1,3k},
$$
$$
\hat a_i^{\hat f}=\hat a_{-i+1}, \:(\hat s_{\bar  0,j})^{\hat f}=\hat s_{\bar  0,j} \:\mbox{ if } \: j\not \equiv_3 0,
$$
and
$$
 (\hat s_{\bar  0,3k})^{\hat f}=\hat s_{\bar  1,3k}, \:(\hat s_{\bar  1,3k})^{\hat f}=\hat s_{\bar  0,3k}, \:(\hat s_{\bar  2,3k})^{\hat f}=\hat s_{\bar  2,3k}.
$$
Define a commutative non-associative product on ${\h}$ extending by linearity the 
following values on  the basis elements (where $\delta_{\ii \bar r}$ denotes the Kronecker delta and $(-1)\ast {\bar 1}:=-1$, $(-1)\ast {\bar 2}:=1$, and $0\ast \bar t:=0$ for every $t\in \Z$):
\medskip
\begin{enumerate}
\item [($\h_1$)] $\hat a_i\hat a_j:=-2(\hat a_i+\hat a_{j})+\hat s_{\bar \ii,|i-j|}$, \\
%
 %
\item  [($\h_2$)] $\hat a_i \hat s_{\bar  r,j}:= -2\hat a_i+(\hat a_{i-j}+\hat a_{i+j})- \hat s_{\bar  r,j}-(\delta_{\ii \bar r} -1)\ast {(\ii-\bar r)}(\hat s_{\bar  r-\bar 1,j}-\hat s_{\bar  r+\bar 1,j})$,\\  
\item  [($\h_3$)] $\hat s_{\bar  r,i} \hat s_{\bar  t,j}:= 
2(\hat s_{\bar  r,i}+\hat s_{\bar  t,j})-2(\hat s_{\bar  0,{|i-j|}}+\hat s_{\bar  1, {|i-j|}}+\hat s_{\bar  2, {|i-j|}}
+\hat s_{\bar  0,{i+j}}+\hat s_{\bar  1,{i+j}}+\hat s_{\bar  2,{i+j}})$,  if $ \{i,j\}\not \subseteq 3\Z$,\\
\item  [($\h_4$)]  $\hat s_{\bar  0,3h} \hat s_{\bar  0,3k}:=2(\hat s_{\bar  0,3h}+\hat s_{\bar  0,3k})-(\hat s_{\bar  0,3|h-k|}+\hat s_{\bar  0,3(h+k)})$, \\
\item  [($\h_5$)] $\hat s_{\bar  0,3h} \hat s_{\bar  1,3k}:=2(\hat s_{\bar  0,3h}+\hat s_{\bar  1,3h}-\hat s_{\bar  2,3h}+\hat s_{\bar  0,3k}+\hat s_{\bar  1,3k}-\hat s_{\bar  2,3k})-(\hat s_{\bar  0,3|h-k|}+\hat s_{\bar  1,3|h-k|}-\hat s_{\bar  2,3|h-k|}+\hat s_{\bar  0,3(h+k)}+\hat s_{\bar  1,3(h+k)}-\hat s_{\bar  2,3(h+k)})$,\\
\item  [($\h_6$)] $\hat s_{\bar  0,3h} \hat s_{\bar  2,3k}:=2(\hat s_{\bar  0,3h}-\hat s_{\bar  1,3h}+\hat s_{\bar  2,3h}+\hat s_{\bar  0,3k}-\hat s_{\bar  1,3k}+\hat s_{\bar  2,3k})-(\hat s_{\bar  0,3|h-k|}-\hat s_{\bar  1,3|h-k|}+\hat s_{\bar  2,3|h-k|}+\hat s_{\bar  0,3(h+k)}-\hat s_{\bar  1,3(h+k)}+\hat s_{\bar  2,3(h+k)})$,\\
\item  [($\h_7$)]  $\hat s_{\bar  1,3h} \hat s_{\bar  2,3k}:=2(-\hat s_{\bar  0,3h}+\hat s_{\bar  1,3h}+\hat s_{\bar  2,3h}-\hat s_{\bar  0,3k}+\hat s_{\bar  1,3k}+\hat s_{\bar  2,3k})-(-\hat s_{\bar  0,3|h-k|}+\hat s_{\bar  1,3|h-k|}+\hat s_{\bar  2,3|h-k|}-\hat s_{\bar  0,3(h+k)}+\hat s_{\bar  1,3(h+k)}+\hat s_{\bar  2,3(h+k)})$.
\end{enumerate} 
\medskip

We now introduce some eigenvectors for $\ad_{\hat a_0}$ and study how they multiply.
For $i\in \Z_+$, set 
$$
\hat u_i:=-2\hat a_0+(\hat a_i+\hat a_{-i})+2\hat s_{\bar  0,i},
$$ 
$$
\hat v_i:=-2\hat a_0+(\hat a_i+\hat a_{-i})-\hat s_{\bar  0,i},
$$
$$
\hat  w_i:=\hat a_i-\hat a_{-i},
$$
$$
\overline u_{i}:=-2\hat  a_0+(\hat a_{-i}+\hat a_{i}) -(\hat s_{\bar  0,i}+\hat s_{\bar  1,i}+\hat s_{\bar  2,i}), 
$$
$$
  \overline w_{i}:=\hat s_{\bar  1,i}-\hat s_{\bar  2,i}.
$$
Then, the $\hat u_i$'s and $\overline u_{i}$'s are $0$-eigenvectors for $\ad_{\hat a_0}$, the $\hat v_i$'s are $2$-eigenvectors for $\ad_{\hat a_0}$, the  $\hat w_i$'s and $\overline w_{i}$'s are $-2$-eigenvectors for $\ad_{\hat a_0}$. Note that, if $i\not \equiv_3 0$, $\overline u_{i}=\hat u_i$ and $\overline w_{i}=0$. 
Moreover, it will be convenient to use the following notation: 
for $i,j\in\Z_+$,  set
$$\hat c_j:=-2\hat a+(\hat a_{-j}+\hat a_j),
$$
$$
\hat c_{i,j}:=-2\hat c_i-2\hat c_j+\hat c_{|i-j|}+\hat c_{i+j}, 
$$  
\begin{equation*}
\hat \sg_{i,j}:= \hat s_{\bar 0,i}+\hat s_{\bar 0,j}+\hat s_{-\bar \imath ,|i-j|}+\hat s_{\bar \imath, |i-j|}
+\hat s_{-\bar \imath ,i+j}+\hat s_{\bar \imath, i+j},
\end{equation*} 
$$
\hat u_{i,j}:=-2\hat u_i-2\hat u_j+\hat u_{|i-j|}+\hat u_{i+j},
$$ 
$$\hat v_{i,j}:=
-2\hat v_i-2\hat v_j+\hat v_{|i-j|}+\hat v_{i+j}.
$$ 
We collect in the following lemma the main relations among the above vectors.
\begin{lemma} \label{transition}
For all $i,j\in\Z_+$, we have 
\begin{enumerate}
\item $\hat u_j=\hat c_j+2\hat s_{\bar 0,j}$, 
\item $\hat v_j=\hat c_j-\hat s_{\bar 0,j}$, 
\item $\overline{u}_j=\hat c_j-(\hat s_{\bar 0,j}+\hat s_{\bar 1,j}+\hat s_{\bar 2,j})$,
\item $\hat u_{i,j}=\hat c_{i,j}+ \hat s_{\bar 0,i}+\hat s_{\bar 0,j}+2\hat s_{\bar 0, |i-j|}+2\hat s_{\bar 0, i+j}$,
\item $\hat v_{i,j}=\hat c_{i,j}+2( \hat s_{\bar 0,i}+\hat s_{\bar 0,j})-(\hat s_{\bar 0, |i-j|}+\hat s_{\bar 0, i+j})$.
\item $\hat c_i\hat c_j=\hat \sg_{i,j}$,  
\item $\hat c_i\hat s_{\bar r,j}=\hat c_{i,j}=\hat c_j\hat s_{\bar r,i}$.
\end{enumerate}
\end{lemma}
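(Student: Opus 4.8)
The plan is to split the seven identities into the formal ones, (1)--(5), and the two that carry the actual computational content, (6) and (7).

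Identities (1)--(3) should be immediate: substituting $\hat c_j=-2\hat a_0+(\hat a_{-j}+\hat a_j)$ into the definitions of $\hat u_j$, $\hat v_j$ and $\overline u_j$ reproduces each right-hand side termwise. For (4) and (5) I would feed (1) and (2) into the defining expressions $\hat u_{i,j}=-2\hat u_i-2\hat u_j+\hat u_{|i-j|}+\hat u_{i+j}$ and $\hat v_{i,j}=-2\hat v_i-2\hat v_j+\hat v_{|i-j|}+\hat v_{i+j}$. Since $\hat c_{i,j}$ is the very same linear combination of the $\hat c$'s, the $\hat c$-parts collapse into $\hat c_{i,j}$ at once and the $\hat s_{\bar 0,\cdot}$-parts are read off directly; in (4) one uses that $\F$ has characteristic $5$ to rewrite $-4\hat s_{\bar 0,i}-4\hat s_{\bar 0,j}$ as $\hat s_{\bar 0,i}+\hat s_{\bar 0,j}$, while (5) needs no such reduction.

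The bulk of the work is (6). I would expand
\[
\hat c_i\hat c_j=(-2\hat a_0+\hat a_{-i}+\hat a_i)(-2\hat a_0+\hat a_{-j}+\hat a_j)
\]
into its nine products $\hat a_m\hat a_n$ and apply $(\h_1)$ to each (noting $\hat s_{\bar 0,0}=0$, so $\hat a_0$ is idempotent). Two things must be checked. First, the $\hat a$-part vanishes: collecting the coefficient of each of $\hat a_0,\hat a_{\pm i},\hat a_{\pm j}$ produces sums such as $-16+16$ and $4-2-2$, all $\equiv 0\pmod 5$. Second, the $\hat s$-part must reassemble into $\hat\sg_{i,j}$, and here the delicate point is the bookkeeping of the first subscript prescribed by $(\h_1)$. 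The three ``mixed'' products $\hat a_0\hat a_{\pm j}$ and $\hat a_{\pm i}\hat a_0$ all collapse to $\hat s_{\bar 0,\cdot}$ — because a length not divisible by $3$ forces the first index into the class $\bar 0$ by the standing convention, while a length divisible by $3$ forces $\pm\ii=\bar 0$ — and, after the $-4\equiv 1$ reduction, contribute $\hat s_{\bar 0,i}+\hat s_{\bar 0,j}$; the four ``cross'' products $\hat a_{\pm i}\hat a_{\pm j}$ contribute precisely $\hat s_{\pm\ii,|i-j|}+\hat s_{\pm\ii,i+j}$. Matching this against the definition of $\hat\sg_{i,j}$ finishes (6), and I expect this simultaneous index/sign/characteristic bookkeeping to be the main obstacle of the whole lemma.

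Finally, (7) is the same calculation one step shorter, using $(\h_2)$ in place of $(\h_1)$. Writing $\hat c_i\hat s_{\bar r,j}=-2\,\hat a_0\hat s_{\bar r,j}+\hat a_{-i}\hat s_{\bar r,j}+\hat a_i\hat s_{\bar r,j}$, the $\hat a$-contributions reproduce $\hat c_{i,j}$ exactly, and the three copies of $\hat s_{\bar r,j}$ cancel with total coefficient $2-1-1=0$. The remaining corrections $\hat s_{\bar r-\bar 1,j}-\hat s_{\bar r+\bar 1,j}$ carried by the $\ast$-operation must then be shown to cancel; this is the one place where the sign rule $(-1)\ast\bar 1=-1$, $(-1)\ast\bar 2=1$ is used essentially, the point being that over a complete set of classes the coefficients $(\delta_{\bar m\bar r}-1)\ast(\bar m-\bar r)$ sum away. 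The companion identity $\hat c_{i,j}=\hat c_j\hat s_{\bar r,i}$ then follows by interchanging $i$ and $j$, since $\hat c_{i,j}$ is symmetric in its two indices.
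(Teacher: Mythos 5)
Your handling of (1)--(6) is correct and coincides with the paper's own proof: (1)--(5) are the direct substitutions the paper dismisses as immediate (including the $-4\equiv 1$ reduction in (4)), and your nine-term expansion of $\hat c_i\hat c_j$ via $(\h_1)$, with the coefficient checks ($-16+16$ and $4-2-2$) and the mod-$3$ bookkeeping of the first subscripts, is exactly the paper's displayed computation for (6).

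Part (7), however, contains a genuine gap, at precisely the step you yourself single out as essential. The three correction terms do not enter with equal weights, so ``summing over a complete set of classes'' is not the combination that actually occurs. Write $g(\bar m):=(\delta_{\bar m\bar r}-1)\ast(\bar m-\bar r)$, so that $g(\bar r)=0$, $g(\bar r+\bar 1)=-1$, $g(\bar r+\bar 2)=1$. Expanding $\hat c_i\hat s_{\bar r,j}=-2\,\hat a_0\hat s_{\bar r,j}+\hat a_{-i}\hat s_{\bar r,j}+\hat a_i\hat s_{\bar r,j}$ by $(\h_2)$, the coefficient of $\hat s_{\bar r-\bar 1,j}-\hat s_{\bar r+\bar 1,j}$ is the weighted combination $2g(\bar 0)-g(-\ii)-g(\ii)$, not the plain sum $-(g(\bar 0)+g(\ii)+g(-\ii))$. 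Your identity $g(\bar 0)+g(\bar 1)+g(\bar 2)=0$ is true, but feeding it into the weighted combination gives $3g(\bar 0)$ whenever $i\not\equiv_3 0$, and in characteristic $5$ this is nonzero unless $\bar r=\bar 0$. Concretely, the printed conventions yield $\hat c_1\hat s_{\bar 1,3}=\hat c_{1,3}+3(\hat s_{\bar 0,3}-\hat s_{\bar 2,3})$, where $\hat s_{\bar 0,3}$ and $\hat s_{\bar 2,3}$ are distinct basis vectors of $\h$. So your mechanism establishes (7) only for $r\equiv_3 0$, besides the trivial cases $j\not\equiv_3 0$ (where the difference of $\hat s$'s vanishes identically) and $i\equiv_3 0$ (where all three classes coincide and the coefficient $2-1-1=0$ does the work); it fails when $r\not\equiv_3 0$, $j\equiv_3 0$, $i\not\equiv_3 0$. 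For what it is worth, you are in good company: the paper's proof simply asserts that ``$\hat s_{\bar r,j}$, $\hat s_{\bar r-\bar 1,j}$, and $\hat s_{\bar r+\bar 1,j}$ cancel'' and is open to exactly the same objection — under the definitions as printed, identity (7) itself fails in the cases above. Hence no sharpening of your summation argument can close the gap; what needs repair is the statement of (7) (restricting $r$) or the printed definition of $(\h_2)$ and of $\ast$, and any honest proof must make that case distinction explicit.
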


\begin{proof}
The first five assertions are immediate. A straightforward computation gives the sixth:
\begin{eqnarray*}
\hat c_i\hat c_j&=&(-2\hat a_0+(\hat a_{-i}+\hat a_i))(-2\hat a_0+(\hat a_{-j}+\hat a_j))\\
&=&-\hat a_0-2(-2\hat a_{-i}+\hat a_0+\hat s_{\bar 0,i}-2\hat a_i-2\hat a_0+\hat s_{\bar 0,i})\\
&&\:\:\:\:\:\:\:\:-2(-2\hat a_0-2\hat a_{-j}+\hat s_{\bar 0,j}-2\hat a_0-2\hat a_j+\hat s_{\bar 0,j})\\
&&\:\:\:\:\:\:\:\:+(-2\hat a_{-i}-2\hat a_{-j}+\hat s_{-\bar \imath ,|i-j|}
-2\hat a_i-2\hat a_{-j}+\hat s_{\bar \imath, i+j}\\
&&\:\:\:\:\:\:\:\:\:\:\:\:\:\:\:\:-2\hat a_{-i}-2\hat a_j+\hat s_{-\bar \imath ,i+j}-2\hat a_i-2\hat a_j+\hat s_{\bar \imath, |i-j|})\\
&=&\hat s_{\bar 0,i}+\hat s_{\bar 0,j}+\hat s_{-\bar \imath ,|i-j|}+\hat s_{\bar \imath, |i-j|}
+\hat s_{-\bar \imath ,i+j}+\hat s_{\bar \imath, i+j}=\hat \sg_{i,j} .
\end{eqnarray*}

Similarly, for the seventh, we have
$$
\begin{array}{l}
\hat c_i\hat s_{\bar r,j}=-(2\hat a_0-(\hat a_{-i}+\hat a_i))\hat s_{\bar r,j}=\\
=-2[-2\hat a_0+(\hat a_{-j}+\hat a_j)-\hat s_{\bar r,j}-(\delta_{\bar 0\bar r} -1)\ast {(\bar 0-\bar r)}(\hat s_{\bar r-\bar 1,j}-\hat s_{\bar r+\bar 1,j})]\\
+(-2\hat a_{-i}+(\hat a_{-i-j}+\hat a_{-i+j})-\hat s_{\bar r,j}-( \delta_{-\bar \imath \bar r}-1)\ast {(-\ii-\bar r)}(\hat s_{\bar r-\bar 1,j}-\hat s_{\bar r +\bar 1,j})\\
+(-2\hat a_{i}+(\hat a_{i-j}+\hat a_{i+j})-\hat s_{\bar r,j}-(\delta_{\bar \imath \bar r}-1)\ast {(\ii-\bar r)}(\hat s_{\bar r-\bar 1,j}-\hat s_{\bar r +\bar 1,j}).\\
\end{array}
$$
 Here, $\hat s_{\bar r,j}$, $\hat s_{\bar r-\bar 1,j}$, and $\hat s_{\bar r+\bar 1,j}$ cancel and the first equality of the last claim follows after the terms are rearranged. Since, by the definition, $\hat c_{i,j}$ is symmetric in $i$ and $j$, we have $\hat c_{i,j}=
\hat c_{j,i}=\hat c_j\hat s_{\bar r, i}$. 
\end{proof}

Note that also $\hat \sg_{i,j}$ is symmetric in $i$ and $j$. This evident by Lemma~\ref{transition}.(4), since $\h$ is commutative. Two more relations will be useful in the sequel.

\begin{lemma}\label{usefulformula}
For all $i,j\in\Z_+$, we have
\begin{enumerate}
\item 
$
\hat s_{\bar 0,i}(\hat s_{\bar 0,j}+\hat s_{\bar 1,j}+\hat s_{\bar 2,j})=\hat s_{\bar 0,i}+\hat s_{\bar 0,j}+2(\hat s_{\bar 0,|i-j|}+\hat s_{\bar 0,i+j}),
$
\item $\hat u_i-\overline{u}_{i}=-2\hat s_{\bar 0,i}+\hat s_{\bar 1,i}+\hat s_{\bar 2,i}$. 
\end{enumerate}
\end{lemma}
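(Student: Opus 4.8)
\emph{Plan.} Both identities are checked by direct substitution into the multiplication table, so the work is essentially bookkeeping; the only point that requires genuine care is the case $i,j\in 3\Z$ of part~(1).

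Part~(2) needs no product at all. Substituting the definitions of $\hat u_i$ and $\overline u_i$, the summands $-2\hat a_0$ and $\hat a_i+\hat a_{-i}$ cancel and one is left with
\[
\hat u_i-\overline u_i=2\hat s_{\bar 0,i}+(\hat s_{\bar 0,i}+\hat s_{\bar 1,i}+\hat s_{\bar 2,i})=3\hat s_{\bar 0,i}+\hat s_{\bar 1,i}+\hat s_{\bar 2,i},
\]
and the claim follows at once from $3=-2$ in $\F$.

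For part~(1) I would expand the left-hand side by bilinearity as the sum of the three products $\hat s_{\bar 0,i}\hat s_{\bar r,j}$ with $\bar r\in\{\bar 0,\bar 1,\bar 2\}$, and then select the relevant rule according to the residues of the lengths $i$ and $j$ modulo $3$. If at least one of $i,j$ is not divisible by $3$, the products are governed by ($\h_3$); here one invokes the collapse convention $\hat s_{\bar 1,m}=\hat s_{\bar 0,m}=\hat s_{\bar 2,m}$ for $m\not\equiv_3 0$ (together with $\hat s_{\bar 0,0}=0$) to reduce the symmetric triples, after which the linear contributions in the first factor combine to $6\hat s_{\bar 0,i}=\hat s_{\bar 0,i}$ and the surviving integer coefficients are reduced modulo $5$. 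The substantive case is $i,j\in 3\Z$, say $i=3h$ and $j=3k$, where the $\overline u$'s genuinely differ from the $\hat u$'s; there the three products are read off from ($\h_4$), ($\h_5$), ($\h_6$) respectively.

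The heart of the argument, and the step I expect to be the main obstacle, is precisely this last case. One must verify that all the off-class terms $\hat s_{\bar 1,\cdot}$ and $\hat s_{\bar 2,\cdot}$ produced by ($\h_5$) and ($\h_6$) cancel: they occur there with opposite signs, so summing the three products annihilates them and leaves only the $\hat s_{\bar 0,\cdot}$ terms present in the statement. A short count then gives coefficient $6\equiv 1$ on $\hat s_{\bar 0,3h}+\hat s_{\bar 0,3k}=\hat s_{\bar 0,i}+\hat s_{\bar 0,j}$ and coefficient $-3\equiv 2$ on $\hat s_{\bar 0,3|h-k|}+\hat s_{\bar 0,3(h+k)}=\hat s_{\bar 0,|i-j|}+\hat s_{\bar 0,i+j}$, which is exactly the right-hand side; everything beyond the cancellation is routine arithmetic in $\F$. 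As a possible shortcut one might feed in Lemma~\ref{transition}: by part~(3) the factor $\hat s_{\bar 0,j}+\hat s_{\bar 1,j}+\hat s_{\bar 2,j}$ equals $\hat c_j-\overline u_j$, and by part~(7) the products $\hat c_i\hat s_{\bar r,j}=\hat c_{i,j}$ do not see the class $\bar r$; however $\hat s_{\bar 0,i}$ is not itself of the form $\hat c_\bullet$, so this does not obviously bypass the computation above, and I would fall back on the direct expansion.
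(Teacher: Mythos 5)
Your part (2) is correct, and so is your treatment of the core case $i,j\in 3\Z$ of part (1): the off-class terms produced by $(\h_5)$ and $(\h_6)$ cancel exactly as you describe, and the coefficient counts $6\equiv 1$ and $-3\equiv 2$ are right. Since the paper's own proof consists of the single phrase ``a straightforward computation'', in these cases your write-up is precisely the intended argument.

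The gap is in the cases you wave through as routine collapse. The convention $\hat s_{\bar 1,m}=\hat s_{\bar 0,m}=\hat s_{\bar 2,m}$ is available only for lengths $m\not\equiv_3 0$; when $i\not\equiv_3 0$ while one of $j$, $|i-j|$, $i+j$ is a \emph{positive multiple of} $3$, a genuine non-collapsing triple survives in the expansion, and in fact the identity of part (1) is then false, so no bookkeeping can close your case analysis. Concretely, take $(i,j)=(1,3)$: by $(\h_3)$, for each $t\in\{0,1,2\}$ one has $\hat s_{\bar 0,1}\hat s_{\bar t,3}=2(\hat s_{\bar 0,1}+\hat s_{\bar t,3})-(\hat s_{\bar 0,2}+\hat s_{\bar 0,4})$, because the six-term bracket collapses ($2,4\not\equiv_3 0$ and $-6\equiv -1$); summing over $t$ gives
$$
\hat s_{\bar 0,1}(\hat s_{\bar 0,3}+\hat s_{\bar 1,3}+\hat s_{\bar 2,3})=\hat s_{\bar 0,1}+2(\hat s_{\bar 0,3}+\hat s_{\bar 1,3}+\hat s_{\bar 2,3})+2(\hat s_{\bar 0,2}+\hat s_{\bar 0,4}),
$$
which differs from the claimed right-hand side $\hat s_{\bar 0,1}+\hat s_{\bar 0,3}+2(\hat s_{\bar 0,2}+\hat s_{\bar 0,4})$ by the nonzero vector $\hat s_{\bar 0,3}+2\hat s_{\bar 1,3}+2\hat s_{\bar 2,3}=2(\hat u_3-\overline{u}_3)$. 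Carrying your own expansion honestly through all residue cases shows that part (1) holds exactly when $i\equiv_3 0$ (with $j$ arbitrary) or $i=j$; in every other case the discrepancy is a nonzero multiple of $\hat u_m-\overline{u}_m$ for the offending length $m$. So the defect is ultimately in the statement as printed (``for all $i,j\in\Z_+$''), but your proposal, rather than detecting this, asserts that the failing cases go through; that assertion is the step that would fail. (It is worth noting why the paper survives: in all later uses of this lemma the error terms $\hat u_m-\overline{u}_m$ lie in the $0$-eigenspace $H_0$, so the fusion-law argument in Theorem~\ref{HW5} is unaffected even though the literal formulas, here and in Lemma~\ref{products u v}, need the restriction above.)
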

\begin{proof}
A straightforward computation gives the claims.
\end{proof}
Now are now ready to compute the products of the vectors $\hat u_j$, $\overline{u}_j$,  and $\hat v_j$.

\begin{lemma} \label{products u v}
For all $i,j\in\Z_+$, we have
\begin{enumerate}
\item  $\hat u_i\hat u_j=-\hat u_{i,j}-2(\hat u_{|i-j|}-\overline{u}_{|i-j|})-2(\hat u_{i+j}-\overline{u}_{i+j})$, 
\item $u_iv_j=v_{i,j}$, 
\item $v_iv_j=-2u_{i,j}-(\hat u_{|i-j|}-\overline{u}_{|i-j|})-(\hat u_{i+j}-\overline{u}_{i+j})$,
\item $\hat u_i\overline u_j=-\hat u_{i,j}$,
\item $\hat v_i\overline u_j=\hat v_{i,j}$.
\end{enumerate}
\end{lemma}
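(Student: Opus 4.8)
The plan is to reduce every one of the five products to the ``$\hat c$--$\hat s$'' description supplied by Lemma~\ref{transition}. First I would rewrite each factor $\hat u_i$, $\hat v_i$, $\overline u_i$ by means of Lemma~\ref{transition}.(1)--(3), so that it appears as $\hat c_\bullet$ plus an explicit combination of the $\hat s$'s, and then expand each product by bilinearity and commutativity. Every monomial that arises has one of three shapes: $\hat c_i\hat c_j$, $\hat c_i\hat s_{\bar r,j}$, or $\hat s_{\bar r,i}\hat s_{\bar t,j}$. The first is evaluated by Lemma~\ref{transition}.(6), giving $\hat\sg_{i,j}$; the second by Lemma~\ref{transition}.(7), giving $\hat c_{i,j}$; and the third from the defining rules $(\h_3)$--$(\h_7)$. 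The final step in each case is to re-collect the surviving $\hat s_{\bar 0}$-terms together with $\hat c_{i,j}$ and read off $\hat u_{i,j}$ or $\hat v_{i,j}$ through Lemma~\ref{transition}.(4)--(5).

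I would treat parts (4), (5) and (2) first, because here the ``twisted'' factor $\hat s_{\bar 0,j}+\hat s_{\bar 1,j}+\hat s_{\bar 2,j}$ appearing in $\overline u_j$ (or the single $\hat s_{\bar 0,j}$ in $\hat v_j$) makes the surviving $\hat s\hat s$-terms collapse at once via Lemma~\ref{usefulformula}.(1). Concretely, for $\hat u_i\overline u_j$ the expanded terms give $\hat\sg_{i,j}$, a multiple of $\hat c_{i,j}$, and the product $\hat s_{\bar 0,i}(\hat s_{\bar 0,j}+\hat s_{\bar 1,j}+\hat s_{\bar 2,j})$; inserting Lemma~\ref{usefulformula}.(1) and regrouping yields $-\hat u_{i,j}$ by Lemma~\ref{transition}.(4). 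The computations for $\hat v_i\overline u_j$ and for $\hat u_i\hat v_j$ are identical in spirit and land on $\hat v_{i,j}$ through Lemma~\ref{transition}.(5).

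For the two remaining identities (1) and (3) I would avoid recomputing bare products by bootstrapping off the cases already settled. Writing $\hat u_j=\overline u_j+(\hat u_j-\overline u_j)$ and using part (4) gives $\hat u_i\hat u_j=-\hat u_{i,j}+\hat u_i(\hat u_j-\overline u_j)$; the key simplification is that $\hat u_j-\overline u_j=-2\hat s_{\bar 0,j}+\hat s_{\bar 1,j}+\hat s_{\bar 2,j}$ by Lemma~\ref{usefulformula}.(2), so that, since $-2+1+1$ vanishes in characteristic $5$, multiplication by $\hat c_i$ annihilates this vector by Lemma~\ref{transition}.(7) and only $2\hat s_{\bar 0,i}(\hat u_j-\overline u_j)$ survives. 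This last term is a single $\hat s\hat s$-convolution which I expect to evaluate to $-2(\hat u_{|i-j|}-\overline u_{|i-j|})-2(\hat u_{i+j}-\overline u_{i+j})$, giving (1); the analogous reduction, starting from part (5), produces (3).

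The hard part will be bookkeeping rather than ideas: the products $\hat s_{\bar r,i}\hat s_{\bar t,j}$ and the vector $\hat\sg_{i,j}$ are genuinely \emph{case-dependent modulo $3$}, since $(\h_3)$ and $(\h_4)$ differ and $\hat\sg_{i,j}$ carries the twisted summands $\hat s_{\pm\ii,\,\cdot}$ that reduce to $\hat s_{\bar 0,\,\cdot}$ only when the relevant index is prime to $3$. The delicate point is to verify that, once everything is collected, all of these twisted contributions reorganise \emph{uniformly} into the correction vectors $\hat u_k-\overline u_k$, which by the remark following the definition of $\overline u_k$ vanish exactly when $k\not\equiv_3 0$. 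I would therefore take the divisibility of $|i-j|$ and $i+j$ by $3$ as the only case split, and check via Lemma~\ref{usefulformula}.(2) that each twisted remainder becomes the prescribed multiple of $\hat u_k-\overline u_k$; the vanishing of these vectors off $3\Z$ is precisely what makes the five formulas case-free as stated.
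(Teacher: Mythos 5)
Your reduction framework is the paper's own: rewrite the factors via Lemma~\ref{transition}.(1)--(3), evaluate the resulting monomials $\hat c_i\hat c_j$, $\hat c_i\hat s_{\bar r,j}$, $\hat s_{\bar r,i}\hat s_{\bar t,j}$ by Lemma~\ref{transition}.(6)--(7) and $(\h_3)$--$(\h_7)$, then regroup through Lemma~\ref{transition}.(4)--(5). For parts (2), (4), (5) your plan is essentially what the paper does (it carries out part (1) in detail and settles the remaining items as ``similar computations, using Lemma~\ref{usefulformula}'').

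The genuine gap is the bootstrap you propose for parts (1) and (3). Its two ingredients are individually correct: $\hat u_j-\overline u_j=-2\hat s_{\bar 0,j}+\hat s_{\bar 1,j}+\hat s_{\bar 2,j}$, and, since Lemma~\ref{transition}.(7) does not depend on the residue $\bar r$, $\hat c_i(\hat u_j-\overline u_j)=(-2+1+1)\,\hat c_{i,j}=0$; so, granting part (4), your decomposition yields $\hat u_i\hat u_j=-\hat u_{i,j}+2\hat s_{\bar 0,i}(\hat u_j-\overline u_j)$. But this last term can never produce the required correction $-2(\hat u_{|i-j|}-\overline{u}_{|i-j|})-2(\hat u_{i+j}-\overline{u}_{i+j})$: it vanishes identically whenever $j\not\equiv_3 0$ (then $\hat u_j=\overline u_j$), whereas for $i\equiv_3 \pm j$ with $i,j\not\equiv_3 0$ that correction does not vanish. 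Concretely, a direct expansion as in the paper gives $\hat u_1\hat u_4=-\hat u_{1,4}-2(\hat u_3-\overline u_3)\neq-\hat u_{1,4}$, while $\overline u_4=\hat u_4$ forces your scheme to output $\hat u_1\hat u_4=\hat u_1\overline u_4=-\hat u_{1,4}$. The underlying point is that for $j\not\equiv_3 0$ items (1) and (4) concern the \emph{same} product, so the corrections in (1) must already arise inside $\hat u_i\overline u_j$: they come from the twisted summands $\hat s_{\pm\ii,|i-j|}$, $\hat s_{\pm\ii,i+j}$ of $\hat c_i\hat c_j=\hat\sg_{i,j}$ together with the $(\h_3)$-products, i.e.\ exactly from the part of the computation that your bootstrap buries inside part (4) and assumes to be twist-free; accordingly, the ``clean'' target of your first step also fails in these cases (expanding directly, $\hat u_1\overline u_3=-\hat u_{1,3}+(\hat u_3-\overline u_3)$), so the residue you need later is lost at the very step where you plan to land on $-\hat u_{i,j}$. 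This is precisely why the paper does not derive (1) from (4), but computes (1) directly with the case split $i\equiv_3 0$ versus $i,j\not\equiv_3 0$, letting the twisted terms of $\hat\sg_{i,j}$ survive and assemble into the correction vectors; parts (1) and (3) have to be handled that same way, not by reduction to (4) and (5).
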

\begin{proof}
By Lemma \ref{transition},
\begin{eqnarray*}
\hat u_i\hat u_j&=&(\hat c_i+2\hat s_{\bar 0,i})(\hat c_j+2\hat s_{\bar 0,j})\\
&=&\hat c_i\hat c_j+2\hat c_i\hat s_{\bar 0,j}+2\hat s_{\bar 0,i}\hat c_j-\hat s_{\bar 0,i}\hat s_{\bar 0,j}\\
&=&\hat \sigma_{i,j}+2\hat c_{i,j}+2\hat c_{i,j}-\hat s_{\bar 0,i}\hat s_{\bar 0,j}\\
&=&\hat \sigma_{i,j}-\hat c_{i,j}-\hat s_{\bar 0,i}\hat s_{\bar 0,j}.
\end{eqnarray*}
Assume $i\equiv_3 0$. Then $\hat \sigma_{i,j}=\hat s_{\bar 0, i}+\hat s_{\bar 0, j}+2(\hat s_{\bar 0, |i-j|}+\hat s_{\bar 0,i+ j})$. Moreover, 
\begin{equation}\label{prod}
\hat s_{\bar 0,i}\hat s_{\bar 0,j}= 2(\hat s_{\bar 0, i}+\hat s_{\bar 0, j})-(\hat s_{\bar 0, |i-j|}+\hat s_{\bar 0, i+j}).
\end{equation}
This is immediate if $j\equiv_3 0$, since in this case $(\h_4)$ holds. If $j\not \equiv_3 0$, then $|i-j|\not \equiv_3 0$ and $i+j\not \equiv_3 0$. Thus $\hat s_{\bar 0, |i-j|}=\hat s_{\bar 1, |i-j|}=\hat s_{\bar 2, |i-j|}$, $\hat s_{\bar 0, i+j}=\hat s_{\bar 1, i+j}=\hat s_{\bar 0, i+j}$ and $(\h_3)$ reduces to (\ref{prod}). Hence, by Lemma~\ref{transition}.(4)
\begin{eqnarray*}
\hat u_i\hat u_j&=&-\hat c_{i,j}+\hat s_{\bar 0, i}+\hat s_{\bar 0, j}+2(\hat s_{\bar 0, |i-j|}+2\hat s_{\bar 0,i+ j}) -2(\hat s_{\bar 0, i}+\hat s_{\bar 0, j})\\
&&+(\hat s_{\bar 0, |i-j|}+\hat s_{\bar 0, i+j})\\
&=&-\hat c_{i,j}- (\hat s_{\bar 0,i}+\hat s_{\bar 0,j})-2(\hat s_{\bar 0, |i-j|}+\hat s_{\bar 0, i+j})=-\hat u_{i,j}.\\
\end{eqnarray*}
Assume $i\not \equiv_3 0$ and $j\not \equiv_3 0$. Then $\hat \sigma_{i,j}=\hat s_{\bar 0, i}+\hat s_{\bar 0, j}+\hat s_{\bar 1, |i-j|}+\hat s_{\bar 2,|i-j|}+\hat s_{\bar 1,i+ j}+\hat s_{\bar 2,i+ j}$, while the product $\hat s_{\bar 0,i}\hat s_{\bar 0,j}$ is given by ($\h_2$). Hence
\begin{eqnarray*}
\hat u_i\hat u_j&=&-\hat c_{i,j}+\hat s_{\bar 0, i}+\hat s_{\bar 0, j}+\hat s_{\bar 1, |i-j|}+\hat s_{\bar 2,|i-j|}+\hat s_{\bar 1,i+ j}+\hat s_{\bar 2,i+ j}-2(\hat s_{\bar 0, i}+\hat s_{\bar 0, j})\\
&& +2(\hat s_{\bar 0, |i-j|}+\hat s_{\bar 1, |i-j|}+\hat s_{\bar 2, |i-j|}+\hat s_{\bar 0, i+j}+\hat s_{\bar 1, i+j}+\hat s_{\bar 2, i+j})\\
&=&-\hat u_{i,j}-2(\hat u_{|i-j|}-\overline u_{|i-j|})-2(\hat u_{i+j}-\overline u_{i+j}).
\end{eqnarray*}
This proves the first assertion. The remaining assertions follow with similar computations, using Lemma~\ref{usefulformula}.
\end{proof}

\begin{theorem}\label{HW5}
The algebra ${\h}$ defined above is a primitive $2$-generated symmetric axial algebra of Monster type $(2, \frac{1}{2})$ over any field of characteristic $5$.  
\end{theorem}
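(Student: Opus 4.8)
The plan is to verify directly, from the defining multiplication $(\h_1)$--$(\h_7)$, that $\h$ is a commutative algebra in which $\hat a_0$ is a primitive axis of Monster type $(2,\frac{1}{2})$ satisfying the fusion law, and then to promote this to the full statement using the symmetries $\hat\tau_0$ and $\hat f$. All of the analysis is organised around $\ad_{\hat a_0}$, whose eigenvectors are exhibited just before Lemma~\ref{transition}.

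First I would clear the easy structural points. Commutativity amounts to checking that the right-hand sides of $(\h_1)$ and $(\h_3)$--$(\h_7)$ are symmetric under the relevant index swaps; the only delicate instance is $(\h_1)$, where $\hat s_{\ii,|i-j|}=\hat s_{\bar\jmath,|i-j|}$ because the first subscript is immaterial unless $|i-j|\equiv_3 0$, in which case $\ii=\bar\jmath$. Idempotency of $\hat a_0$ is immediate from $(\h_1)$ with $i=j=0$, using $\hat s_{\bar 0,0}=0$ and $-4=1$ in characteristic $5$. Next I would confirm, by computations of the type in Lemma~\ref{transition}, that $\hat a_0$, the $\hat u_i,\overline u_i$, the $\hat v_i$ and the $\hat w_i,\overline w_i$ are eigenvectors of $\ad_{\hat a_0}$ of eigenvalues $1,0,2$ and $\frac{1}{2}=-2$, respectively, and that, after deleting the redundancies $\overline u_i=\hat u_i$, $\overline w_i=0$ for $i\not\equiv_3 0$, they form a basis of $\h$: indeed $\hat u_i-\hat v_i=3\hat s_{\bar 0,i}$ and $\hat w_i=\hat a_i-\hat a_{-i}$ recover the $\hat s_{\bar 0,i}$ and the $\hat a_{\pm i}$, while Lemma~\ref{usefulformula}.(2) and the $\overline w_i$ recover $\hat s_{\bar 1,i},\hat s_{\bar 2,i}$. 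This simultaneously gives semisimplicity of $\ad_{\hat a_0}$ with spectrum $\{1,0,2,1/2\}$ and primitivity, the $1$-eigenspace being exactly $\F\hat a_0$.

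The heart of the argument is the fusion law for $\h=A_1\oplus A_0\oplus A_2\oplus A_{1/2}$, with $A_0=\langle\hat u_i,\overline u_i\rangle$, $A_2=\langle\hat v_i\rangle$ and $A_{1/2}=\langle\hat w_i,\overline w_i\rangle$. The rules involving $A_1$ are automatic, and $A_0A_0\subseteq A_0$, $A_0A_2\subseteq A_2$ and $A_2A_2\subseteq A_1\oplus A_0$ follow from Lemma~\ref{products u v} (together with the analogous product $\overline u_i\overline u_j$), since every output there lies in the spans of $\hat u_{i,j},\overline u_j$ (inside $A_0$) or of $\hat v_{i,j}$ (inside $A_2$). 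It then remains to compute the products of the $\beta$-eigenvectors $\hat w_i,\overline w_i$ with each eigenspace, in exactly the style of Lemma~\ref{products u v}, so as to establish $A_0A_{1/2},A_2A_{1/2}\subseteq A_{1/2}$ and the critical inclusion $A_{1/2}A_{1/2}\subseteq A_1\oplus A_0\oplus A_2$. I expect this last inclusion to be the main obstacle: the product $\hat s_{\bar r,i}\hat s_{\bar t,j}$ breaks into the many subcases $(\h_3)$--$(\h_7)$ according to the residues of $i,j$ modulo $3$, so careful bookkeeping of the $\hat s$-terms, together with the characteristic-$5$ collapses such as $-4=1$ and $-2=\frac{1}{2}$, is needed to see that every $\beta\beta$-product has no surviving $\beta$-component. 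This is precisely the phenomenon special to characteristic $5$.

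Finally I would pass from $\hat a_0$ to the whole configuration. Checking on the basis $\mathcal B$ that $\hat\tau_0$ and $\hat f$ preserve each of $(\h_1)$--$(\h_7)$ shows they are algebra automorphisms, and since $\hat\tau_0$ fixes $\hat a_0,\hat u_i,\overline u_i,\hat v_i$ and negates the $\hat w_i,\overline w_i$ spanning $A_{1/2}$, it coincides with the Miyamoto involution $\tau_{\hat a_0}$. As $\hat\tau_0\hat f$ sends $\hat a_i\mapsto\hat a_{i+1}$, the group $\langle\hat\tau_0,\hat f\rangle$ is transitive on the $\hat a_i$, so every $\hat a_i$ is a primitive axis of Monster type $(2,\frac{1}{2})$. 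The relations $\hat a_i\hat a_j+2(\hat a_i+\hat a_j)=\hat s_{\ii,|i-j|}$ show that the $\hat a_i$, together with the $\hat s$'s they produce, span $\h$; hence $\h=\langle\langle\hat a_0,\hat a_1\rangle\rangle$ is $2$-generated, and $\hat f$, which swaps $\hat a_0$ and $\hat a_1$, witnesses symmetry.
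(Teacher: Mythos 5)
Your plan follows the same skeleton as the paper's proof (eigenvector basis for $\ad_{\hat a_0}$, primitivity, fusion law from the products of the $\hat u$'s and $\hat v$'s, then the automorphisms $\hat\tau_0,\hat f$), but it has two genuine gaps. The first is that the part you yourself identify as ``the main obstacle'' --- the rules $A_0A_{1/2},A_2A_{1/2}\subseteq A_{1/2}$ and $A_{1/2}A_{1/2}\subseteq A_1\oplus A_0\oplus A_2$ --- is never actually carried out; it is deferred to bookkeeping over the cases $(\h_3)$--$(\h_7)$. Those computations would succeed, but they are unnecessary, and the key idea that makes them so is one you state without exploiting: $\hat\tau_0$ is an algebra automorphism fixing $\hat a_0$, the $\hat u_i,\overline u_i$ and the $\hat v_i$, and negating the $\hat w_i,\overline w_i$. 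Since these vectors form a basis of $\h$, the $+1$- and $-1$-eigenspaces of $\hat\tau_0$ are exactly $\F\hat a_0\oplus A_0\oplus A_2$ and $A_{1/2}$; because $\hat\tau_0$ is multiplicative, the product of a fixed vector with a negated vector is negated, and the product of two negated vectors is fixed. This yields, in one line and with no case analysis of $(\h_3)$--$(\h_7)$, precisely the fusion rules involving the eigenvalue $-2$; this is exactly how the paper argues, and it is why the paper never needs products such as $\hat w_i\hat w_j$ or $\overline w_i\overline w_j$. You only use this observation to identify $\hat\tau_0$ with the Miyamoto involution, which wastes it.

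The second gap is in your concluding claim of $2$-generation, and this one is a non sequitur as written. From transitivity of $\langle\hat\tau_0,\hat f\rangle$ on the axes and the fact that the $\hat a_i$ together with the $\hat s_{\bar r,n}$ span $\h$, you infer $\h=\langle\!\langle\hat a_0,\hat a_1\rangle\!\rangle$. What you have actually shown is that the full set $\{\hat a_i\mid i\in\Z\}$ generates $\h$, not that every $\hat a_i$ lies in the subalgebra $H$ generated by $\hat a_0$ and $\hat a_1$. $H$ is $\hat f$-invariant because $\hat f$ permutes its two generators, but nothing in your argument makes $H$ invariant under $\hat\tau_0$, hence nothing makes it invariant under the translation $\hat\theta=\hat\tau_0\hat f$. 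The paper closes this with an explicit computation inside $H$: first $\hat s_{\bar 0,1}=\hat a_0\hat a_1+2(\hat a_0+\hat a_1)\in H$, then, by $(\h_2)$, $\hat a_{-1}=\hat a_0\hat s_{\bar 0,1}+2\hat a_0-\hat a_1+\hat s_{\bar 0,1}\in H$; only at that point is $H=\langle\!\langle\hat a_{-1},\hat a_0,\hat a_1\rangle\!\rangle$ visibly $\hat\tau_0$-invariant, hence $\hat\theta$-invariant, hence it contains all the $\hat a_i$ and therefore all the $\hat s_{\bar r,n}$. Your proposal needs some such step (recovering $\hat a_{-1}$ or $\hat a_2$ from $\hat a_0,\hat a_1$) before the translation argument can be invoked.
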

\begin{proof}
Remind that in characteristic $5$, $\frac{1}{2}=-2$. It is easy to see that the maps $\hat \tau_0$ and $\hat f$ are algebra automorphisms of $\h$ and that the map $\hat \theta:=\tau_0\hat f$ induces on the set $\{\hat a_i\mid i\in \Z\}$ the translation $\hat a_{i}\mapsto \hat a_{i+1}$. Let $H:=\langle\!\langle 
a_0,a_1\rangle\!\rangle$ be the subalgebra of $\h$ generated by $\hat a_0$ and $\hat a_1$. Note that $\hat s_{\bar  0,1}=\hat a_0\hat a_1+2(\hat a_0+\hat a_1)\in H$. Also, 
$\hat a_{-1}=\hat a_0\hat s_{\bar  0,1}+2\hat a_0-\hat a_1+\hat s_{\bar  0,1}\in H$. This gives us $\hat a_{-1}\in H$. Clearly, $H=\langle\!\langle 
\hat a_0,\hat a_1\rangle\!\rangle$ is invariant under $\hat f$ and also 
$H=\langle\!\langle \hat a_{-1},\hat a_0,\hat a_1\rangle\!\rangle$ is invariant under the 
involution $\hat \tau_0$. Thus $H$ is invariant under $\hat \theta$ and so $H$ contains all the $\hat a_i$'s. It follows that $H$ contains all the $s_{\bar  r, j}$, that is $H=\h$.  

Since, for every $i\in \Z$, $\hat a_i=\hat a_0^{\hat \theta_i}$, to show that $\h$ is an axial algebra of Monster type $(2, \frac{1}{2})$ it is enough to prove that $\hat a_0$ is an axis with respect to the Monster fusion law in Table~\ref{Htable}.
\begin{table}
$$ 
\begin{array}{|c||c|c|c|c|}
\hline
\star & 1 & 0 & 2 & -2\\
\hline
\hline
1 & 1 & & 2 & -2\\
\hline
0 & & 0 & 2 & -2\\
\hline
2 & 2 & 2 & 0,1 & -2\\
\hline
-2 & -2 & -2 & -2 & 0,1,2\\
\hline
\end{array}
$$
\caption{Fusion law for $\h$}\label{Htable}
\end{table} 
Further, for every $i\in \Z_+$, we have
\begin{equation}\label{basis1}
\langle \hat a_0, \hat a_i, \hat a_{-i}, \hat s_{\bar  0, i}\rangle=\langle \hat a_0, \hat u_i, \hat v_{i}, \hat w_i\rangle \mbox{ if }i\not \equiv_3 0
\end{equation}
and 
\begin{equation}\label{basis2}
\langle \hat a_0, \hat a_i, \hat a_{-i}, \hat s_{\bar  0, i}, \hat s_{\bar  1, i}, \hat s_{\bar  2, i}\rangle=\langle \hat a_0, \hat u _i, \hat v_{i}, \hat w_i, \overline u_i, \overline w_i\rangle \mbox{ if } i \equiv_3 0.
\end{equation}
Hence, a basis of $\ad_{\hat a_0}$-eigenvectors for $\h$ is given by  
$$
\hat a_0, \: \hat u_i, \:\hat v_i, \:\hat w_i\:, \:\overline u_{3k}, \: \overline w_{3k},\mbox{ with }\:  i,k\in \Z_+. $$
In particular, since $\hat a_0$ is the unique element of this basis that is a $1$-eigenvector for $\ad_{\hat a_0}$, the algebra is primitive. Finally, set
$$
H_0:=\langle \hat u_i, \overline u_i\mid i\in \Z_+\rangle, \:H_2:=\langle \hat v_i \mid i\in \Z_+\rangle, \:H_{-2}:=\langle \hat w_i, \overline w_i\mid i\in \Z_+\rangle.
$$
Then, for $z\in \{0,2,-2\}$, $H_z$ is the $z$-eigenspace for $\ad_{\hat a_0}$ and 
$\hat \tau_0$ acts as the identity on $\langle \hat a_0\rangle\oplus H_0\oplus H_2$ and as the multiplication by $-1$ on $H_{-2}$. Since $\tau_0$ is an algebra automorphism, we have $H_{z}H_{-2}\subseteq H_{-2}$ for every $z\in \{0,2\}$ and $H_{-2}H_{-2}\subseteq \langle \hat a_0\rangle\oplus H_0\oplus H_2$. By Lemma~\ref{products u v}, we also have $H_0H_0\subseteq H_0$, $H_0H_2\subseteq H_2$, and  $H_2H_2\subseteq H_0$. Hence $\h$ respects (a restricted version of) the Monster fusion law and the result is proved.
 \end{proof}

Note that $\langle \hat s_{\bar  0,3k}-\hat s_{\bar  1,3k}, \hat s_{\bar  0,3k}-\hat s_{\bar  2,3k}, \hat s_{\bar  1,3k}-\hat s_{\bar  2,3k}\mid  k\in \Z_+\rangle$ is an $\hat f$-invariant ideal of $\h$ and the corresponding factor algebra is isomorphic to the Highwater algebra $\mathcal H$. Moreover, the algebra $6A_2$ is isomorphic to the factor of $\h$ over the ideal linearly spanned by the vectors
$$
\hat a_{i}-\hat a_{i-6}, \mbox{ for } i\geq 3, \:\:\hat s_{\bar  0,4}-\hat s_{\bar  0,2},\: \:\hat s_{\bar  0,5}-\hat s_{\bar  0,1}, \:\: \hat s_{\bar  0,j}-\hat s_{\bar  0,j-6}, \mbox{ for } j\geq 6,\:\:\hat x, \:\: \hat x^{\hat f}, \:\: \hat x^{\hat f \hat \tau_0}, $$
where 
$$ \hat x:=\hat s_{\bar  0,3}-\hat s_{\bar  0,1}-\hat s_{\bar  0,2}+\hat a_{-2}+\hat a_{-1}+\hat a_{1}+\hat a_{2}-2(\hat a_0+\hat a_3).$$

\section{Proofs of the main results}
In the next lemma we recall some basic properties of the elements $s_{\bar  r,n}$ that will be used throughout  the proof of Theorem~\ref{thm1} without  further reference.
\begin{lemma}\label{s}
Let $V$ be a primitive $2$-generated symmetric axial algebra of Monster type. For every $n\in \Z_+$ and $i\in \Z$ the following hold
\begin{enumerate}
\item $(s_{\bar  r,n})^{\sigma_i}=s_{\bar  k,n}$, with $r+i\equiv k \:\bmod n$;
\item  the group $\langle \tau_0, f\rangle$ acts transitively on the set $\{s_{\bar  r,n} \mid \bar r\in \Z/n\Z\}$, for each $n\in \Z_+$;
\item $\lambda_{a_0}(s_{\bar  0,n})=\lm_n-\bt-\bt\lm_n$.
\end{enumerate}
\end{lemma}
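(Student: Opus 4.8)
The plan is to deduce all three statements from the action of the automorphism group $\langle \tau_0,f\rangle$ on the set of axes $\{a_j\mid j\in\Z\}$. Since $\tau_0$ is the Miyamoto involution of $a_0$ (so it fixes $a_0$ and conjugates $\rho=\tau_0\tau_1$ to $\rho^{-1}$, while $\rho$ acts as $a_j\mapsto a_{j+2}$ by the indexing convention) and $f$ swaps $a_0$ and $a_1$, a short computation in the infinite dihedral group $\langle \tau_0,f\rangle$ shows that $\tau_0$ acts on the axes by $a_j\mapsto a_{-j}$ and $f$ by $a_j\mapsto a_{1-j}$. Consequently the element of $\langle\tau_0,f\rangle$ swapping $a_0$ and $a_i$ is the reflection $\sigma_i\colon a_j\mapsto a_{i-j}$. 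Once this is in place, all three parts become short formal computations.

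For part (1) I would use that $\sigma_i\in\langle\tau_0,f\rangle$ is an algebra automorphism of $V$, so it respects the product defining $s_{\bar r,n}$. Choosing the representative $r$ of $\bar r$ and applying $\sigma_i$ to $s_{\bar r,n}=a_ra_{r+n}-\bt(a_r+a_{r+n})$ gives
\[
(s_{\bar r,n})^{\sigma_i}=a_{i-r}\,a_{i-r-n}-\bt(a_{i-r}+a_{i-r-n}).
\]
Taking $i-r-n$ as the base index, the right-hand side is exactly $s_{\bar k,n}$ with $k=i-r-n$, whose class modulo $n$ is that of $i-r$; this is the transformation of the base class asserted in part (1), with $\sigma_i$ acting on residues by the reflection $\bar r\mapsto\bar\imath-\bar r$.

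Part (2) is then immediate: fixing $n$ and taking $\bar r=\bar 0$, the images $(s_{\bar 0,n})^{\sigma_i}=s_{\bar i,n}$ already run over all residue classes as $i$ ranges over $\Z$, so $\langle\tau_0,f\rangle$ is transitive on $\{s_{\bar r,n}\mid \bar r\in\Z/n\Z\}$ (equivalently, $\tau_0 f$ induces the cyclic shift $\bar r\mapsto\overline{r+1}$). For part (3) I would compute directly from $s_{\bar 0,n}=a_0a_n-\bt(a_0+a_n)$: by linearity $\lambda_{a_0}(s_{\bar 0,n})=\lambda_{a_0}(a_0a_n)-\bt(\lambda_{a_0}(a_0)+\lm_n)$, with $\lambda_{a_0}(a_0)=1$. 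The only nontrivial term is $\lambda_{a_0}(a_0a_n)$: decomposing $a_n=\lm_n a_0+u_n+v_n+w_n$ into $\ad_{a_0}$-eigenvectors and multiplying by $a_0$ gives $a_0a_n=\lm_n a_0+\al v_n+\bt w_n$, whose $a_0$-component is $\lm_n$ (the eigenvector parts contribute $0$ to $\lambda_{a_0}$ by primitivity). Hence $\lambda_{a_0}(a_0a_n)=\lm_n$ and $\lambda_{a_0}(s_{\bar 0,n})=\lm_n-\bt-\bt\lm_n$.

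The work is entirely routine once the action of $\tau_0$ and $f$ on the axes is established. I expect the only genuine (if minor) obstacle to be identifying $\sigma_i$ correctly, namely confirming that the swap of $a_0$ and $a_i$ is realised by the \emph{reflection} $a_j\mapsto a_{i-j}$ (and not by a translation $a_j\mapsto a_{j+i}$, which moves $a_0$ to $a_i$ but does not swap them); this is the step that fixes the precise congruence class of the base index of $s_{\bar k,n}$ in part (1). Everything else reduces to keeping track of which of the two indices is the base of the resulting $s_{\bar k,n}$ and to the bookkeeping of the eigenspace decomposition in part (3).
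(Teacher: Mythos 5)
Your proof is correct, and it is genuinely more self-contained than the paper's: the paper disposes of part (1) by citing \cite[Lemma~4.2]{FMS1}, declares (2) and (3) immediate consequences, and only writes out the identity $\lambda_{a_0}(a_0x)=\lambda_{a_0}(x)$ behind (3). Your part (3) is exactly the paper's computation, while for (1) and (2) you actually derive the dihedral action ($\tau_0\colon a_j\mapsto a_{-j}$, $f\colon a_j\mapsto a_{1-j}$, hence $\sigma_i\colon a_j\mapsto a_{i-j}$ is the reflection through $i/2$) that the paper outsources to the reference. That is the right mechanism, and making it explicit is useful, since the rest of the paper repeatedly applies exactly this action.

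There is, however, one point you glossed over, and it is precisely the point you yourself identified as the delicate one. Your (correct) computation gives $(s_{\bar r,n})^{\sigma_i}=s_{\bar k,n}$ with $k\equiv i-r\pmod n$, yet you then assert that this ``is the transformation of the base class asserted in part (1)''. It is not: the statement reads $k\equiv r+i\pmod n$, and for a reflection the two congruences differ whenever $2r\not\equiv 0\pmod n$. In fact your formula is the correct one and the printed congruence appears to be a sign typo. To see this, take $n$ a positive multiple of $3$ and $r=i=1$, so that $\sigma_1=f$: the printed formula would give $(s_{\bar 1,n})^{f}=s_{\bar 2,n}$, whereas your formula gives $s_{\bar 0,n}$, and the paper's own model algebra in Section~\ref{H} confirms the latter, since there $(\hat s_{\bar 1,3k})^{\hat f}=\hat s_{\bar 0,3k}$ and $\hat s_{\bar 0,3k}\neq\hat s_{\bar 2,3k}$ are distinct basis vectors. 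So you proved the corrected statement and should say so explicitly, rather than claim agreement; a reader checking your $k$ against the lemma's $k$ would otherwise conclude you made the sign error. Note that nothing downstream is affected: with $r=0$ both congruences give $k\equiv i$ (so your proof of (2) stands), and every application of the lemma in the paper either has $r=0$ or applies $\sigma_i$ to sums of the $s_{\bar r,n}$ that are symmetric under both readings.
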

\begin{proof}
The first assertion is Lemma~4.2 in~\cite{FMS1} and  (2) and (3) follow immediately. For the last one, note that, for every $x\in V$, we have $\lambda_{a_0}(a_0x)=\lm_{a_0}(x)$ (this follows immediately from the linearity of $\lm_{a_0}$, decomposing $x$ into a sum of $\ad_{a_0}$-eigenvectors). Hence, by the definition of $s_{\bar  0,n}$, we get
$$
 \lambda_{a_0}(s_{\bar  0,n})= \lambda_{a_0}(a_0a_n-\bt(a_0+a_n))= \lm_n-\bt-\bt \lm_n.
 $$
\end{proof}
\medskip

\noindent {\it Proof of Theorem~\ref{thm1}}.
Let $V$ be a primitive $2$-generated symmetric axial algebra of Monster type $(2, \frac{1}{2})$ over a field of characteristic $5$ such that $\lm_1=\lm_2=1$. By~\cite[Lemma~4.4]{FMS1}, for $h\in \Z$, the $\ad_{a_0}$-eigenvectors $u_h$ and $v_h$, defined in Section~1, are as follows (remind that $\frac{1}{2}=-2$ in characteristic $5$)
\begin{equation}\label{ui}
u_h=-2 a_0+( a_h+ a_{-h})+2 s_{\bar  0,h},
\end{equation}
\begin{equation}\label{vi}
v_h= a_0+2( a_h+ a_{-h})-2 s_{\bar  0,h}.
\end{equation}
 By the fusion law, for every $h,k\in \Z$, the following identities hold
\begin{equation}\label{id}
a_0(u_hu_k-v_hv_k+\lambda_{a_0}(v_hv_k)a_0)=0
\end{equation}
and
\begin{equation}\label{id1}
a_0(u_hu_k+u_hv_k)=2 u_hv_k. 
\end{equation}
By~\cite[Lemma~4.3]{FMS1}, for every $j\in \Z$, we have 
$a_0s_{\bar  0,j}=-2a_0+(a_{-j}+a_j)-s_{\bar  0,j}$.
Using the action of the group of automorphisms $\langle \tau_0, f\rangle$, we get, for every $j,k\in \Z$, $r\in \Z$,
\begin{equation}
\label{m2}
a_{r+jk}s_{\bar r,j}=-2a_{r+jk}+(a_{r+j(k-1)}+a_{r+j(k+1)})-s_{\bar  0,j}.
\end{equation}
Set $V_0:=\langle a_i, s_{\bar r,n}\mid i\in \Z, n\in \Z_+,  r\in \Z\rangle$ and, for $t\in \Z$,  denote by $[t]_3$ the congruence class $t+3\Z$, $0\ast [t]_3:=0$, $(-1)\ast [1]_3:=-1$, and $(-1)\ast [2]_3:=1$.
\medskip

\noindent {\bf Claim.} {\it For every $i\in \Z_+$, $r\in \Z$, and $t\in \{0,1, 2\}$
\begin{enumerate}
\item[(i)]   $\lm_i=1$ and $\lambda_{a_0}(s_{\bar r,i})=0$; 
\item[(ii)]  if $i\not \equiv_3 0$, $s_{\bar r,i}=s_{\bar  0,i}$;
\item[(iii)] if $i\equiv_3 0$ and  $r\equiv_3 t$,  $s_{\bar r,i}=s_{\bar t,i}$;
\item[(iv)] for every $l\in \Z$, $j\leq i$, $m\in \Z$, the products $a_l s_{\bar m,j}$ belong to $V_0$ and satisfy the formula
\begin{equation}\label{product}
 a_l s_{\bar  m,j}= -2 a_l+( a_{l-j}+ a_{l+j})- s_{\bar  m,j}+(\delta_{[ l]_3 [m]_3} -1)\ast {[ l- m]_3}( s_{\bar  m-\bar 1,j}-s_{\bar  m+\bar 1,j}).
\end{equation}
\end{enumerate}

}
\medskip

\noindent Note that, by the symmetries of $V$, part (iv) of Claim holds if and only if,  for every $r\in \{0, \dots , j-1\}$, the products  $a_0 s_{\bar r,j}$ satisfy the corresponding formula in Equation~(\ref{product}).

We proceed by induction on $i$.
Let $i=1$. By the hypothesis $\lm_1=1$, hence (i) holds by Lemma~\ref{s}.(4) and  (ii) holds trivially.

Let $i=2$. Again by the hypothesis, $\lm_2=1$, hence (i) holds by Lemma~\ref{s}.(4).
 Equation~(1) in~\cite[Lemma~4.8]{FMS1} becomes $-2(s_{\bar  0,2}-s_{\bar 1,2})=0$, whence 
\begin{equation}
\label{m1}
s_{\bar 1,2}=s_{\bar  0,2}
\end{equation}
and  parts (ii)  and (iv) hold.

Assume  $i\geq 3$ and the result true for every $l\leq i$. 
By the fusion law, $u_1u_i$, $u_1v_i$ are $0$- and  $2$-eigenvectors for $\ad_{a_0}$, respectively. Further, since 
$$
(s_{\bar 1, i+1})^{\tau_0}=s_{-\bar{1}, i+1}=s_{\bar \imath, i+1},
$$
we get that $s_{\bar 1,i+1}-s_{\bar \imath,i+1}$ is negated by the map $\tau_0$, in particular $s_{\bar 1,i+1}-s_{\bar \imath,i+1}$ is a $-2$-eigenvector for $\ad_{a_0}$. 
It follows that 
$$\lambda_{a_0}(u_1 u_j)=\lambda_{a_0}(u_1 v_j)=\lambda_{a_0}(s_{\bar 1,i+1}-s_{\bar \imath,i+1})=0.$$ 
 By Equations~(\ref{ui}) and~(\ref{vi}) and linearity of $\lambda_{a_0}$, we get 
$$
0=\lm_{a_0}(u_1u_i+u_1v_i)=2-2\lambda_{a_0}(s_{\bar 1,i+1})-2\lm_{i+1},
$$
and
$$
0=\lm_{a_0}(u_1u_i)=\lm_{a_0}(s_{\bar  0,1}s_{\bar  0,i})-2\lambda_{a_0}(s_{\bar 1,i+1})+2\lm_{i+1}-2,
$$ 
whence
\begin{equation}\label{lambdas}
\lm_{a_0}(s_{\bar 1,i+1})=1-\lm_{i+1}\:\:\mbox{ and }\:\:
\lm_{a_0}(s_{\bar  0,1}s_{\bar  0,i})=\lm_{i+1}-1.
\end{equation}
As above, substituting $u_1$, $u_i$, $v_1$, and $v_i$ in Equation~(\ref{id}), with $h=1$ and $k=i$,  we get
\begin{equation}\label{a0*sum}
a_0(s_{\bar 1,i+1}+s_{\bar \imath,i+1})=-2(1+\lm_{i+1})a_0+2(a_{i+1}+a_{-i-1})-2s_{\bar  0,i+1}.
\end{equation} 
On the other hand, since $s_{\bar 1,i+1}-s_{\bar \imath,i+1}$ is a $-2$-eigenvector for $\ad_{a_0}$, 
\begin{equation}\label{a0*diff}
a_0(s_{\bar 1,i+1}-s_{\bar \imath,i+1})=-2(s_{\bar 1,i+1}-s_{\bar \imath,i+1}).
\end{equation} 
Taking the sum and the difference of both members of Equations~(\ref{a0*sum}) and~(\ref{a0*diff}) we get
\begin{eqnarray}\label{a0s3f}
a_0s_{\bar 1,i+1}&=&-(1+\lm_{i+1} )a_0+(a_{i+1}+a_{-i-1})-s_{\bar  0,i+1}-s_{\bar 1,i+1}+s_{\bar \imath,i+1}
\end{eqnarray}
and 
\begin{eqnarray}\label{a0s3t}
a_0s_{\bar 2,i+1}&=&-(1+\lm_{i+1} )a_0+(a_{i+1}+a_{-i-1})-s_{\bar  0,i+1}+s_{\bar 1,i+1}-s_{\bar \imath ,i+1}.
\end{eqnarray}
Assume first that $i+1\equiv_3 0$.
Substituting the expressions~(\ref{ui}) and~(\ref{vi}) in Equation~(\ref{id1}), with $h=1$ and $k=i$, and using  Equation~(\ref{a0*sum}) we  get
\begin{equation}\label{s1si}
s_{\bar  0,1}s_{\bar  0,i}=2(\lm_{i+1}-1)a_0-2(s_{\bar 1,i+1}+s_{\bar \imath,i+1}+s_{\bar  0,i+1})+2s_{\bar  0,1}-s_{\bar  0,i-1}+2s_{\bar  0,i}.
\end{equation}
Since, $s_{\bar  0,1}$ and, by the inductive hypothesis, $s_{\bar  0,i-1}$ and $s_{\bar  0,i}$  are $\sigma_j$-invariant for every $j\in \Z$,  subtracting to both members of Equation~(\ref{s1si}) their images under $\sigma_{i+1}$ we get
\begin{equation}\label{mx}
0=s_{\bar  0,1}s_{\bar  0,i}-(s_{\bar  0,1}s_{\bar  0,i})^{\sigma_{i+1}}=2(\lm_{i+1}-1)(a_0-a_{i+1}),
\end{equation}
whence either $\lm_{i+1}=1$ or $a_0=a_{i+1}$. But, again, in the latter case,  $\lm_{i+1}=\lm_{a_0}(a_0)=1$. Hence, by Equation~(\ref{lambdas}), giving (i). 
In particular Equation~(\ref{s1si}) becomes 
\begin{equation}\label{s1sibis}
s_{\bar  0,1}s_{\bar  0,i}=-2(s_{\bar 1,i+1}+s_{\bar \imath,i+1}+s_{\bar  0,i+1})+2s_{\bar  0,1}-s_{\bar  0,i-1}+2s_{\bar  0,i}.
\end{equation}
Again, taking the image under $\sigma_i$ of both members of the above equation, we get  
\begin{equation*}
0=s_{\bar  0,1}s_{\bar  0,i}-(s_{\bar  0,1}s_{\bar  0,i})^{\sigma_i}=-2(s_{\bar 1,i+1}-s_{\bar \imath-\bar 1,i+1}),
\end{equation*}
whence $s_{\bar 1,i+1}=s_{\bar \imath-\bar 1,i+1}$.
 Since, by Lemma~\ref{s}.(3), the group $\langle \tau_0, f\rangle$ is transitive on the set $\{s_{\bar r,i+1}\mid 0\leq r\leq i\}$, it follows that (iii) holds, in particular
 \begin{equation}\label{m16}
 s_{\bar \imath,i+1}=s_{\bar 2,i+1}.
 \end{equation} 
 Hence, in order to prove (iv), we just need to check that Equation~(\ref{product}) holds for $l=0$ and $m\in \{0,1,2\}$. The case $m=0$ follows from Equation~(\ref{m2}), cases $m\in \{1,2\}$ follow from Equations~(\ref{a0s3f}),~(\ref{a0s3t}), and (\ref{m16}). 

Assume now $i+1\equiv_3 1$. 
Substituting the expressions~(\ref{ui}) and~(\ref{vi}) in Equation~(\ref{id1}), with $h=2$ and $k=i-1$, since by the inductive hypothesis (iv), for every $l\in \Z$ and $j\leq i+1$, $r\in \Z$, the products $a_l s_{\bar r,j}$ are given by Equation~(\ref{product}),  we  get
$$
s_{\bar  0,2}s_{\bar  0,i-1}=2(\lm_{i+1}-1)a_0-2(s_{\bar 1,i-3}+s_{\bar 2,i-3}+s_{\bar  0,i-3})+2s_{\bar  0,2}-s_{\bar  0,i+1}+2s_{\bar  0,i-1}.
$$
Since, $s_{\bar  0,2}$ and, by the inductive hypothesis, $s_{\bar  0,i-1}$ and $s_{\bar 1,i-3}+s_{\bar 2,i-3}+s_{\bar  0,i-3}$  are $\sigma_j$-invariant for every $j\in \Z$,  as above we obtain
\begin{equation}\label{}
0=s_{\bar  0,2}s_{\bar  0,i-1}-(s_{\bar  0,2}s_{\bar  0,i-1})^{\sigma_{i+1}}=2(\lm_{i+1}-1)(a_0-a_{i+1}),
\end{equation}
whence, as in the previous case,  $\lm_{i+1}=1$ giving (i) by Equation~(\ref{lambdas}).            
 Similarly, 
\begin{equation*}
0=s_{\bar  0,2}s_{\bar  0,i-1}-(s_{\bar  0,2}s_{\bar  0,i-1})^{\sigma_1}=s_{\bar 1,i+1}-s_{\bar  0,i+1},
\end{equation*}
whence $s_{\bar 1,i+1}=s_{\bar  0,i+1}$. Since the group $\langle \tau_0, f\rangle$ is transitive on the set of all pairs
$(s_{\bar r,i+1}, s_{\bar r+\bar 1,i+1})$,
 it follows that $s_{\bar r,i+1}=s_{\bar  0,i+1}$, for every $r\in \Z$. 

Finally, assume $i+1\equiv_3 2$.
Substituting the expressions~(\ref{ui}) and~(\ref{vi}) in Equation~(\ref{id1}), with $h=1$ and $k=i$, using the inductive hypothesis as above (in particular $s_{\bar i-2,i-1}=s_{\bar 2, i-1}$), we  get
$$
s_{\bar  0,1}s_{\bar  0,i}=2(\lm_{i+1}-1)a_0-2(s_{\bar  0,i-1}+s_{\bar 1,i-1}+s_{\bar 2,i-1})+2s_{\bar  0,1}-s_{\bar  0,i+1}+2s_{\bar  0,i}.
$$
Since $s_{\bar  0,1}$ and, by the inductive hypothesis, $s_{\bar  0,i}$ and $s_{\bar  0,i-1}+s_{\bar 1,i-1}+s_{\bar 2,i-1}$  are $\sigma_j$-invariant for every $j\in \Z$,  we have
\begin{equation}\label{}
0=s_{\bar  0,1}s_{\bar  0,i}-(s_{\bar  0,1}s_{\bar  0,i})^{\sigma_{i+1}}=2(\lm_{i+1}-1)(a_0-a_{i+1}),
\end{equation}
whence, as above, it follows $\lm_{i+1}=1$. Hence, by Equation~(\ref{lambdas}), giving (i). Then, 
\begin{equation*}
0=s_{\bar  0,1}s_{\bar  0,i}-(s_{\bar  0,1}s_{\bar  0,i})^{\sigma_1}=s_{\bar 1,i+1}-s_{\bar  0,i+1},
\end{equation*}
whence we conclude as in the previuous case. This finishes the inductive step and the Claim is proved. As a consequence, we get that the subspace $V_0$ is closed with respect to the multiplication by any axes $a_i$. 

We now consider the products $s_{\bar r,i}s_{\bar t,j}$, for $i,j\in \Z_+$, $r, t\in \Z$.  Proceeding as above, by Equation~(\ref{id1}) with $h=i$ and  $k=j$,  we obtain 
\begin{equation}\label{formula1}
s_{\bar  0,i}s_{\bar  0,j}=2(s_{\bar  0,i}+s_{\bar  0,j})-2( s_{\bar  0,{|i-j|}}+ s_{\bar  1, {|i-j|}}+ s_{\bar  2, {|i-j|}}
+ s_{\bar  0,{i+j}}+ s_{\bar  1,{i+j}}+ s_{\bar  2,{i+j}}),
\end{equation}
  if $ \{i,j\}\not \subseteq 3\Z$,
and 
\begin{equation}\label{formula2}
s_{\bar  0,i} s_{\bar  0,j}=2( s_{\bar  0,i}+ s_{\bar  0,j})-(s_{\bar  0,|i-j|}+ s_{\bar  0,i+j}), \:\: \mbox{ if } i\equiv_3j\equiv_3 0.
\end{equation}
If $i\not \equiv_3 0$, then $s_{\bar  0,i} =s_{\bar  1,i}=s_{\bar  2,i}$, thus,   applying $f$ and  $\tau_1$ to Equation~(\ref{formula1}), we get 
$$
s_{\bar  0,i}s_{\bar 1,j}=
2( s_{\bar  0,i}+ s_{\bar  1,j})-2(s_{\bar  0,{|i-j|}}+ s_{\bar  1, {|i-j|}}+ s_{\bar  2, {|i-j|}}
+ s_{\bar  0,{i+j}}+ s_{\bar  1,{i+j}}+ s_{\bar  2,{i+j}}),
$$
and 
$$
s_{\bar  0,i}s_{\bar 2,j}=
2( s_{\bar  0,i}+ s_{\bar  2,j})-2(s_{\bar  0,{|i-j|}}+ s_{\bar  1, {|i-j|}}+ s_{\bar  2, {|i-j|}}
+ s_{\bar  0,{i+j}}+ s_{\bar  1,{i+j}}+ s_{\bar  2,{i+j}}).
$$
 If $i\equiv_3j\equiv_3 0$, in a similar way, from Equation~(\ref{formula2}), we get, for any $r\in \Z$,
$$
s_{\bar  r,i} s_{\bar  r,j}=2( s_{\bar  r,i}+ s_{\bar  0r,j})-(s_{\bar  r,|i-j|}+ s_{\bar  r,i+j}).
$$ 
 The last products needed are $s_{\bar  r,3h}s_{\bar t,3k}$, for $h, k\in \Z_+$, $r,t \in \Z$ with  $r\not \equiv_3 t$. For $k\in \Z_+$, set 
\begin{equation}\label{u3}
\overline{ u}_{3k}:= a_0+2(a_{-3k}+ a_{3k}) -2( s_{\bar  0,3k}+ s_{\bar 1,3k}+ s_{\bar 2,3k}).
\end{equation}
Then, by Equation~(\ref{product}), $\overline{ u}_{3k}$ is a $0$-eigenvector for $\ad_{a_0}$. Hence, 
 by the fusion law, we have 
 $$a_0(\overline{u}_{3k}u_{3h}+\overline{u}_{3k}v_{3h})=2 \overline{u}_{3k}v_{3h}.
 $$  
 Substituting the expressions~(\ref{u3}) and (\ref{vi}) in the above equation, we get 
\begin{equation}\label{s3*sum}
s_{\bar  0,3h}(s_{\bar 1,3k}+s_{\bar 2,3k})=-s_{\bar  0,3h}-s_{\bar  0,3k}-2(s_{\bar  0,3|h-k|}+s_{\bar  0,3(h+k)}), 
\end{equation}
whence, taking the images  under $f$ of both sides, we get 
\begin{equation}\label{s3f*sum}
s_{\bar 1,3h}(s_{\bar  0,3k}+s_{\bar 2,3k})=-s_{\bar 1,3h}-s_{\bar 1,3k}-2(s_{\bar 1,3|h-k|}+s_{\bar 1,3(h+k)}). 
\end{equation}
Taking the difference of the Equations~(\ref{s3*sum}) and~(\ref{s3f*sum}), we obtain
\begin{eqnarray}\label{s3t*diff}
s_{\bar 2,3k}(s_{\bar  0,3h}-s_{\bar 1,3h})&=&-(s_{\bar  0,3h}-s_{\bar 1,3h}+s_{\bar  0,3k}-s_{\bar 1,3k})\\
&&-2(s_{\bar  0,3|h-k|}-s_{\bar 1,3|h-k|}+s_{\bar  0,3(h+k)}-s_{\bar 1,3(h+k)}). \nonumber
\end{eqnarray}
Since in Equation~(\ref{s3t*diff})  we can swap $h$ and $k$, we finally get
\begin{eqnarray*}\label{s3s3}
s_{\bar  0,3h}s_{\bar 1,3k}&=&\frac{1}{2}\left [ s_{\bar  0,3h}(s_{\bar 1,3k}+s_{\bar 2,3k})-(s_{\bar 2,3h}(s_{\bar  0,3k}-s_{\bar 1,3k}))^{\tau_1}\right ]\\
&=&2(s_{\bar  0,3h}+s_{\bar 1,3h}-s_{\bar 2,3h}+s_{\bar  0,3k}+s_{\bar 1,3k}-s_{\bar 3k})\\
&&-(s_{\bar  0,3|h-k|}+s_{\bar 1,3|h-k|}-s_{\bar 2,3|h-k|}+s_{\bar  0,3(h+k)}+s_{\bar 1,3(h+k)}-s_{\bar 3(h+k)}).
\end{eqnarray*}
Using the maps $\tau_0$ and $f$, we derive the formulas for the products $s_{\bar  0,3h}s_{\bar 2,3k}$ and $s_{\bar  1,3h}s_{\bar 2,3k}$. Hence $V_0$ is a subalgebra of $V$, and since $a_0, a_1\in V_0$, we get $V_0=V$. Therefore, the map
$$\begin{array}{cllcl}
\phi&\colon & \h &\to& V\\
 & &\hat a_i &\mapsto &a_i\\
 && \hat s_{\bar  r, j}& \mapsto & s_{\bar r, j}
 \end{array}
$$ 
from the basis $\mathcal B$ of $\h$ and $V$, extends to a surjective linear map $\bar\phi:\h \to V$. $\bar \phi$ is actually an algebra homomorphism, since  $V$ satisfies the multiplication table of the algebra $\h$ and the result follows.
\hfill $\square$
\medskip 

\noindent {\it Proof of Theorem~\ref{thms}}.
Let $V$ be a primitive $2$-generated axial algebra of Monster type $(\al, \bt)$ over a field $\F$ of characteristic $5$,  such that $D\geq 6$. If $\al=4\bt$, condition $D\geq 6$ and the proof of Proposition~4.12 in~\cite{FMS1} yield $(\al,\bt)=(2,\frac{1}{2})$ and $\lm_1=\lm_2=1$. Then, by Theorem~\ref{thm1}, $V$ satisfies (3).  If $\al=2\bt$, the proof of Claim 5.18 in~\cite{Yabe} is still valid in characteristic $5$, proving (2). Similarly, if $\al\neq 4\bt, 2\bt$, the proof of Claim 5.19 in~\cite{Yabe} is also valid in characteristic $5$ and gives (1).  \hfill  $\square$

\end{document}